\newtheorem{theorem}{Theorem}[section]
\newtheorem{cor}[theorem]{Corollary}
\theoremstyle{definition}
\newtheorem{definition}[theorem]{Definition}
\theoremstyle{remark}
\numberwithin{equation}{section}
\begin{document}

\newcommand{\spacing}[1]{\renewcommand{\baselinestretch}{#1}\large\normalsize}
\spacing{1.14}

\title{On the left invariant $(\alpha,\beta)$-metrics on some Lie groups}

\author {M. Hosseini}

\address{Department of Mathematics\\ Faculty of  Sciences\\ University of Isfahan\\ Isfahan\\ 81746-73441-Iran.} \email{hoseini\_masomeh@ymail.com}

\author {H. R. Salimi Moghaddam}

\address{Department of Mathematics\\ Faculty of  Sciences\\ University of Isfahan\\ Isfahan\\ 81746-73441-Iran.} \email{hr.salimi@sci.ui.ac.ir and salimi.moghaddam@gmail.com}

\keywords{Left invariant $(\alpha,\beta)$-metric, Randers metric, Finsler metric of Berwald type, Finsler metric of Douglas type, flag curvature\\
AMS 2010 Mathematics Subject Classification: 53B40, 22E60, 22E25.}


\begin{abstract}
We give the explicit formulas of the flag curvatures of left invariant Matsumoto and Kropina metrics of Berwald type. We can see these formulas are different from previous results given recently. Using these formulas, we prove that at any point of an arbitrary connected non-commutative nilpotent Lie group, the flag curvature of any left invariant Matsumoto and Kropina metrics of Berwald type admits zero, positive and negative values, this is a generalization of Wolf's theorem. Then we study $(\alpha,\beta)$-metrics of Berwald type and also Randers metrics of Douglas type on two interesting families of Lie groups considered by Milnor and Kaiser, containing Heisenberg Lie groups. On these spaces, we present some necessary and sufficient conditions for $(\alpha,\beta)$-metrics to be of Berwald type and also some necessary and sufficient conditions for Randers metrics to be of Douglas type. All left invariant non-Berwaldian Randers metrics of Douglas type are given and the flag curvatures are computed.
\end{abstract}

\maketitle

\section{\textbf{Introduction}}
The family of $(\alpha,\beta)$-metrics is an important and interesting class of Finsler metrics. These metrics are computable because they are expressed in terms of a Riemannian metric and a $1$-form. On the other hand such Finsler metrics have many applications in physics. In fact the first $(\alpha,\beta)$-metric, before it is named $(\alpha,\beta)$-metrics by M. Matsumoto in 1972 (see \cite{Matsumoto1}), was introduced by G. Randers in 1941 for its application in general relativity (see \cite{Randers}). This metric was named Randers metric by R. S. Ingarden \cite{Ingarden} and found many applications in physics. For example the authors of \cite{Chang-Li} have proposed a modified Friedmann model in Randers space for
possible alternative to the dark energy hypothesis. One can find another application of Finsler metrics in \cite{Li-Wang-Chang}, where the authors study the gravitational field equation in Randers space-time, and give a perturbational version of it (For more applications see \cite{AnInMa} and \cite{Asanov}).\\
A very useful and applicable special case happens when the invariant Finsler metrics on Lie groups are considered. In this case we can study the interactions between geometric properties and algebraic structures of Lie groups (as a comprehensive reference one can see \cite{Deng-book}). In this paper we consider left invariant $(\alpha,\beta)$-metrics on Lie groups. Computing flag curvature of Finsler metrics is an important and difficult problem in Finsler geometry. In the first part of this work, in the special cases of left invariant Matsumoto and Kropina metrics of Berwald type, the explicit formulas for computing flag curvatures are given. Easily we can see that these formulas are different from previous results recently given in \cite{Liu-Deng}. In \cite{Wolf}, J. Wolf showed that the sectional curvature of a left invariant Riemannian metric on an arbitrary connected non-commutative nilpotent Lie group admits zero, negative and positive values. This result is extended to Randers metrics in \cite{Deng-Hu Advances}. Using flag curvature formulas given in this article, we have generalized Wolf's result to left invariant Matsumoto and Kropina metrics of Berwald type on connected non-commutative nilpotent Lie groups. \\
An interesting family of Lie groups, which we have denoted by $\mathcal{G}_1$, is a particular class of solvable Lie groups introduced by J. Milnor in \cite{Milnor}. Let $G$ be a non-commutative Lie group and $\frak{g}$ be its Lie algebra. Then $G$ belongs to $\mathcal{G}_1$ if the Lie bracket of any two vectors $x, y\in \frak{g}$ is a linear combination of $x$ and $y$. In \cite{Milnor}, it is shown that if $G\in\mathcal{G}_1$ then there exist an element $b\in\frak{g}$ and a commutative ideal $\frak{u}$ of codimension one such that $[b,z]=z$ for any $z\in\frak{u}$.\\
 V. Kaiser considered another attractive class of Lie groups, the family of Lie groups with one-dimensional commutator groups. He studied the Ricci curvature of left invariant Riemannian metrics on these Lie groups (see \cite{Kaiser}). We have denoted this family by $\mathcal{G}_2$. We study $(\alpha,\beta)$-metrics of Berwald type and also Randers metrics of Douglas type on these two families of Lie groups. We give some necessary and sufficient conditions for $(\alpha,\beta)$-metrics to be of Berwald type and also some necessary and sufficient conditions for Randers metrics to be of Douglas type. On these spaces, we will present all left invariant non-Berwaldian Randers metrics of Douglas type. At any case, the flag curvature is computed.


\section{\textbf{Preliminaries}}
In this section we review some preliminaries about Riemannian and Finsler manifolds.\\
A Riemannian metric $g$ on a Lie group $G$ is called left invariant if
\begin{equation}
g(x)(y,z)=g(e)(dl\,_{x^{-1}} y,dl\,_{x^{-1 }} z) \quad    \text{for every}\quad         x\in G\quad ,\quad y,z\in T_x G,
\end{equation}
where $e$ denotes identity element.\\
Suppose that $G$ is a Lie group equipped with a left invariant Riemannian metric $g$, and $\frak{g}$ denotes its Lie algebra. Then for any two left invariant vector fields $X,Y\in\frak{g}$, the Levi-Civita connection and the sectional curvature can be computed as follows (see equations 2.1 and 2.2 of \cite{Alekseevskii}):
\begin{equation}\label{Levi-Civita connection}
\nabla _X Y=\dfrac{1}{2} (ad_X Y-ad_X^* Y-ad_Y^* X),
\end{equation}

\begin{equation}\label{sectional curvature}
K(X,Y)=\Vert \nabla _X Y\Vert ^2-g(\nabla _X X,\nabla _Y Y)-g(\left[ Y,\left[ Y,X\right] \right] ,X) -\Vert [X,Y]\Vert ^2,
\end{equation}
where $\Vert.\Vert^2:=g(.,.)$.
\begin{definition}
Let $M$ be differentiable manifold and $TM$ denotes its tangent bundle. Then a function $F:TM \longrightarrow \left[ 0, \infty \right)$ is said to be Finsler metric on the manifold $M$ if
\begin{itemize}
\item[i)] $F$ be a smooth function on $TM\backslash \{0\}$,
\item[ii)] $F(x,\lambda y)=\lambda F(x,y)$, \ \ \ \ for all $\quad  \lambda>0$,
\item[iii)] The hessian matrix $(g_{ij})=\left( \dfrac{1}{2} \dfrac{\partial ^2 F^2}{\partial y^i \partial y^j}\right) $ be positive definite for all $(x,y) \in TM\backslash \{0\}$.
\end{itemize}
\end{definition}
Similar to left invariant Riemannian metrics, a Finsler metric $F$ on a Lie group $G$ is called left invariant if
\begin{equation}
F(x,y)=F(e,dl\,_{x^{-1}}y)\ \ \text{for every}\ x\in G, y\in T_x G.
\end{equation}
An important family of Finsler metrics introduced by M. Matsumoto \cite{Matsumoto1}, is the class of $(\alpha , \beta)$-metrics. A Finsler metric is called an $(\alpha , \beta)$-metric if $F = \alpha \phi (\frac{\beta}{\alpha})$, where  $\alpha (x,y)=\sqrt{g_{ij}y^iy^j}$ and  $\beta (x,y)=b_iy^i$, and $g$ and $\beta$ are a Riemannian metric and a 1-form respectively, as follows
\begin{eqnarray}
 && g=g_{ij }dx^i \otimes  dx^j, \\
 &&\beta =b_i dx^i.
\end{eqnarray}
It can be shown that $F$ is a Finsler metric if $\phi : (-b_0 , b_0) \longrightarrow \mathbb{R^+}$ is a $C^\infty$ function satisfying
\begin{equation}
\phi (s) - s \phi ^{'} (s) + ( b^2 - s^2 ) \phi ^{''} (s) > 0, \qquad  \vert s \vert  \leq b < b_0,
\end{equation}
and  $\Vert \beta _x\Vert _{\alpha}=\sqrt{g^{ij} (x)b_i(x) b_j(x) } < b_0$ for any $x \in M$, where  $(g^{ij}(x))$ is the inverse matrix of $(g_{ij}(x))$ (for more details see  \cite{Chern-Shen}).\\
For instance if $\phi (s) = 1+s$, $\phi (s) = \frac{1}{1-s}$ or $\phi (s) = \frac{1}{s}$, then we obtain three important classes of Finsler metrics,  namely  Randers metrics $F=\alpha + \beta $, Matsumoto metrics $F=\frac{\alpha ^2}{\alpha - \beta} $ and Kropina metrics $F=\frac{\alpha ^2}{\beta}$, respectively.\\
In special case, it has been shown that a Randers metric $\alpha + \beta$ (Matsumoto metric  $\dfrac{\alpha ^2}{\alpha - \beta} $ ) is a Finsler metric if  and only if  $\Vert \beta _x\Vert _{\alpha}=\sqrt{g^{ij} (x)b_i(x) b_j(x) } < 1$ ($\Vert \beta _x\Vert _{\alpha}=\sqrt{g^{ij} (x)b_i(x) b_j(x) } < \frac{1}{2}$) (see \cite{Chern-Shen}).\\
In a natural way, the Riemannian metric $g$ induces an inner product on any cotangent
space $T^*_xM$. The induced inner product on $T^*_xM$ induces a linear isomorphism between $T^*_xM$  and $T_xM$ (for more details see \cite{Deng-Hou Phys. A}). Then the 1-form $\beta$ corresponds to a vector field $X$ on $M$ such that
\begin{equation}
g(y,X(x)) = \beta (x,y).
\end{equation}
Therefore we can consider Randers, Matsumoto and Kropina metrics as follows
\begin{equation}\label{Randers}
F(x,y)=\sqrt{g\left( y ,y \right)}+g\left( X\left( x\right) ,y\right) ,
\end{equation}
\begin{equation}\label{Matsumoto}
F(x,y)=\dfrac{g\left( y ,y\right)}{\sqrt{g\left( y ,y\right)}-g\left( X\left( x\right) ,y\right)},
\end{equation}
\begin{equation}\label{Kropina}
F(x,y)=\dfrac{g\left( y ,y\right)}{g\left( X\left( x\right) ,y\right)}.
\end{equation}
An important concept in Finsler geometry is flag curvature. This is a generalization of the concept of sectional
curvature in Riemannian geometry and defined by
\begin{equation}\label{flag curvature main formula}
K(P,y)=\dfrac{g_y \left( R_y \left( u\right) ,u\right) }{g_y (y,y) g_y (u,u)-g_y^2 (y,u)},
\end{equation}
where $P=\textit{span} \lbrace u,y \rbrace $, $g_y(u,v)=\frac{1}{2}\frac{\partial ^2}{\partial s \partial t}F^2(y+su+tv)\mid _{s=t=0}$, $R_y(u)=R(u,y)y=\nabla _u \nabla _y y -\nabla _y \nabla _u y- \nabla _{[u,y]}y$ and $\nabla$ is the Chern connection of $F$ (for more details see \cite{Bao-Chern-Shen,Chern-Shen}).\\
For a Finsler manifold $(M,F)$, in a standard local coordinate system of $TM$, the spray coefficients $G^i$ of $F$ are defined as follows:
\begin{equation}\label{spray coefficients}
G^i:=\frac{1}{4}g^{il}([F^2]_{x^my^l}y^m-[F^2]_{x^l}).
\end{equation}
The geodesics are characterized by the following system of ODEs:
\begin{equation}\label{Geodesic ODE}
    \ddot{x}^i+2G^i(x,\dot{x})=0.
\end{equation}
A Finsler metric $F$ is called a Berwald metric (or of Berwald type) if in any standard local coordinate system $(x,y)$ in $TM\backslash \{0\}$,
the Christoffel symbols of the Chern connection $\Gamma^i_{jk}$ have no dependence on $y$, in which case, the
Chern connection's Christoffel symbols $\Gamma^i_{jk}$ are functions only of $x\in M$, and  also the spray coefficients $G^i=\frac{1}{2}\Gamma^i_{jk}(x)y^jy^k$ are quadratic in $y$.\\
A Finsler metric $F$ is said to be of Douglas type (or a Douglas metric) if in any standard local coordinate system $(x,y)$ of $TM$, the spray coefficients $G^i$ of $F$ are of the form
\begin{equation}\label{spray coefficients of Douglas metrics}
    G^i=\frac{1}{2}\Gamma^i{jk}(x)y^jy^k+P(x,y)y^i,
\end{equation}
where $P(x ,y)$ is a local positively homogeneous function of degree one on $TM$ (see \cite{Bacso-Matsumoto} and \cite{Chern-Shen}). \\
For an $(\alpha,\beta)$-metric $F$ it has been shown that it is of Berwald type if and only if the 1-form $\beta$ is parallel with respect to the Levi-Civita connection of $\alpha$ (see \cite{Kikuchi} and \cite{Matsumoto2}). For Randers metrics in \cite{Bacso-Matsumoto} it is shown that a Randers metric is of Douglas type if and only if the 1-form $\beta$ is closed.

\section{\textbf{Flag curvature of left invariant Matsumoto and Kropina metrics of Berwald type}}
In this section we give the flag curvature formulas of left invariant Matsumoto and Kropina metrics of Berwald type on an arbitrary Lie group. In \cite{Deng-Hou Canad. J}, the authors have given the flag curvature formula of left invariant Randers metrics of Berwald type as follows:
\begin{equation}\label{flag curvature of Randers metrics of Berwald type}
K^F(P,y)=\dfrac{g^2}{F^2} K^g(P),
\end{equation}
for the flag $(P,y)$ and where $K^F$ and $K^g$ denote the flag curvature of $F$ and the sectional curvature of $g$, respectively. In \cite{Liu-Deng}, the authors have mentioned that the previous formula will remained valid for any left invariant $(\alpha,\beta)$-metric of Berwald type. But it seems that it is not true, because the fundamental tensors of $(\alpha,\beta)$-metrics are very different from each other.  The formulas given in this section show that the formula \ref{flag curvature of Randers metrics of Berwald type} is not true at least for left invariant Matsumoto and Kropina metrics of Berwald type. Therefore, the results of formula 4.6 of \cite{Liu-Deng} may need improvement.\\
After computing the flag curvatures, we generalize Wolf's result from left invariant Riemannian metrics to left invariant Matsumoto and Kropina metrics of Berwald type.\\

The following theorem shows that why we only consider the Berwaldian  Matsumoto and Kropina metrics.
\begin{theorem}
There isn't any non-Berwaldian Matsumoto and Kropina metrics of Douglas type.
\end{theorem}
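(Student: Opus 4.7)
The plan is to use the standard formula for the spray coefficients of an $(\alpha,\beta)$-metric $F=\alpha\phi(\beta/\alpha)$ and confront it with the Douglas condition \eqref{spray coefficients of Douglas metrics}. On any Riemannian manifold $(M,\alpha)$ with $1$-form $\beta=b_i\,dx^i$, one has the classical decomposition
\begin{equation*}
G^i \;=\; G^i_\alpha \;+\; \alpha\,Q\,s^i{}_0 \;+\; \Theta\,(r_{00}-2\alpha Q s_0)\,\frac{y^i}{\alpha} \;+\; \Psi\,(r_{00}-2\alpha Q s_0)\,b^i,
\end{equation*}
where $G^i_\alpha$ are the spray coefficients of $\alpha$, $r_{ij}=\tfrac12(b_{i|j}+b_{j|i})$ and $s_{ij}=\tfrac12(b_{i|j}-b_{j|i})$ are the symmetric and skew parts of $\nabla\beta$ with respect to the Levi-Civita connection of $\alpha$, and $Q,\Theta,\Psi$ are universal functions of $s=\beta/\alpha$ built from $\phi$. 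Since $G^i_\alpha$ is already quadratic in $y$, the Douglas condition is equivalent to the requirement that $G^i-G^i_\alpha-P(x,y)y^i$ be a quadratic polynomial in $y$ for some positively $1$-homogeneous function $P$.

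The argument is then case by case. For the Matsumoto metric $\phi(s)=1/(1-s)$ one computes $Q=1/(1-2s)$, so $\alpha Q=\alpha^2/(\alpha-2\beta)$ involves $\alpha$ in an irrational way, and similar computations give $\Theta(s),\Psi(s)$ as rational functions of $s$. For the Kropina metric $\phi(s)=1/s$ one gets $Q=-1/(2s)$, $\Theta=-1/b^2$, $\Psi=1/(2b^2)$, hence $\alpha Q=-\alpha^2/(2\beta)$ is rational in $y$ but carries $\beta$ in the denominator. After substituting into the spray formula and clearing denominators, I would separate the terms in $G^i-G^i_\alpha-Py^i$ by their $y$-degree and by whether they carry an odd power of $\alpha$, thereby reducing the Douglas condition to a finite system of polynomial identities in $y$ that must be satisfied by the tensors $r_{ij}$, $s_{ij}$ and the vector $b^i$. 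Inspection of these identities forces $r_{ij}=0$ and $s_{ij}=0$, i.e.\ $b_{i|j}=0$.

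By the Kikuchi--Matsumoto criterion recalled in the preliminaries, the condition $b_{i|j}=0$ is precisely the Berwald condition for an $(\alpha,\beta)$-metric, so every Douglas Matsumoto or Kropina metric is automatically Berwaldian, which is the desired statement. The main obstacle will be the Kropina case: because $\phi(s)=1/s$ is singular at $s=0$, the spray identity only holds on the conic open set $\{\beta\neq 0\}$, and the non-Riemannian contributions contain $\alpha^2/\beta$ which is rational rather than irrational in $y$; one therefore cannot conclude merely by separating the two parts, but must argue via divisibility by $\beta$ and show that the relevant cubic expression in $y$ can have a quadratic quotient by $\beta$ only when $s^i{}_j=0$, and analogously for the $r_{00}$-terms. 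The Matsumoto case is comparatively cleaner, since the irrationality of $\alpha$ in $\alpha/(\alpha-2\beta)$ supplies a direct rational/irrational separation in $y$.
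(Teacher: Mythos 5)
Your route is entirely different from the paper's: the paper disposes of this statement in two lines by invoking Theorem 1.1 of \cite{Liu-Deng}, which classifies left invariant (homogeneous) Douglas $(\alpha,\beta)$-metrics as being either of Berwald type or of Randers type, and then observing that Matsumoto and Kropina metrics are not of Randers type. You instead attack the general local problem through the $Q,\Theta,\Psi$ spray decomposition. That could in principle work for the Matsumoto case (where, in dimension at least $3$, the relevant ODE in the known classification of Douglas $(\alpha,\beta)$-metrics has no admissible solution, so Douglas does force $\nabla\beta=0$), but the decisive step of your argument --- ``inspection of these identities forces $r_{ij}=0$ and $s_{ij}=0$'' --- is exactly the content of the theorem and is nowhere carried out; as written the proposal is a plan rather than a proof.

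More seriously, the asserted outcome of that inspection is wrong in the Kropina case. With $\phi(s)=1/s$ one finds $\Psi=\frac{1}{2b^{2}}$ and $\Theta$ proportional to $s$, so the $r_{00}$-contribution to $G^{i}-G^{i}_{\alpha}$ splits into $\frac{r_{00}}{2b^{2}}\,b^{i}$, which is already quadratic in $y$, plus a term proportional to $y^{i}$, which is absorbed into $P(x,y)y^{i}$. Hence the Douglas condition for a Kropina metric places no constraint whatsoever on $r_{ij}$; it only restricts the skew part (the classical condition is $b^{2}s_{ij}=b_{i}s_{j}-b_{j}s_{i}$). In particular, any closed but non-parallel $\beta$ already yields a Douglas Kropina metric with $\nabla\beta\neq 0$, so your strategy cannot conclude $b_{i|j}=0$. (Kropina metrics are moreover singular, so the Kikuchi--Matsumoto equivalence ``Berwald $\Leftrightarrow\nabla\beta=0$'' that you invoke at the end is itself not available for them without further justification.) A correct argument here must use the left invariance hypothesis in an essential way, which is precisely what the paper's appeal to the homogeneous classification of \cite{Liu-Deng} does and what your purely local computation discards.
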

\begin{proof}
Based on theorem 1.1 of \cite{Liu-Deng}, a left invariant Douglas $(\alpha,\beta)$-metric must be of Berwald type or of Randers type. Easily we can see Matsumoto and Kropina metrics aren't of Randers type.
\end{proof}
Now we compute the flag curvatures.\\
\begin{theorem}
Let $G$ be a Lie group and $\frak{g}$ denotes its Lie algebra. Assume that $F$ is a left invariant Matsumoto
metric of Berwald type, arisen from a left invariant Riemannian metric $g$ and a left invariant vector field $X$. Also let $(P,y)$ be a flag in $\frak{g}$ such that $\{v,y\}$ be an orthonormal basis of $P$ with respect to $g$. Then the flag curvature $K^F(P,y)$ satisfies the following formula
\begin{equation}\label{flag curvature of Matsumoto metric of Berwald type}
  K^F(P,y) =\frac{2-F}{F^2(2F^2g^2(X,v)+2-F)} K^g (P),
\end{equation}
where $K^g$ denotes the sectional curvature of the Riemannian metric $g$.
\end{theorem}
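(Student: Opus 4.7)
The plan is to exploit the Berwald hypothesis to reduce the curvature computation to a Riemannian one, then compute the fundamental tensor $g_y$ of the Matsumoto metric explicitly and assemble the pieces of \eqref{flag curvature main formula}. Since $F$ is Berwald, the Kikuchi--Matsumoto result recalled in the Preliminaries gives that the $1$-form $\beta$, equivalently the vector field $X$, is parallel with respect to the Levi-Civita connection $\nabla$ of $g$. Consequently the spray coefficients of $F$ reduce to $\tfrac12\Gamma^{i}_{jk}(x)y^{j}y^{k}$ with $\Gamma^{i}_{jk}$ the Christoffel symbols of $g$, so the Chern connection of $F$ coincides with $\nabla$, and the Finslerian curvature operator agrees with its Riemannian counterpart: $R_y(v)=R^{g}(v,y)y$.

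Next I would compute $g_y(\cdot,\cdot)$ for $F=\alpha^{2}/(\alpha-\beta)$ at a unit vector $y$ by direct differentiation from $g_y(u,w)=\tfrac{1}{2}\partial_{s}\partial_{t}F^{2}(y+su+tw)|_{s=t=0}$. With $\phi(s)=1/(1-s)$ and $\alpha_0=1$, the relation $F(1-g(X,y))=1$ lets me eliminate $g(X,y)$ in favour of $F$, and the standard $(\alpha,\beta)$-metric formulas reduce to
\begin{align*}
g_y(y,y)&=F^{2},\qquad g_y(y,v)=F^{3}\,g(X,v),\\
g_y(u,w)&=F^{2}(2-F)\,g(u,w)+3F^{4}\,g(X,u)\,g(X,w)\qquad\text{for } u,w\perp y.
\end{align*}
Using the $g$-orthonormality of $\{v,y\}$ these identities give
\[
g_y(y,y)g_y(v,v)-g_y(y,v)^{2}=F^{4}\bigl(2-F+2F^{2}g^{2}(X,v)\bigr),
\]
which after cancelling a factor of $F^{2}$ is exactly the denominator appearing in the stated formula.

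For the numerator I would set $W:=R^{g}(v,y)y$ and decompose it relative to $y$. Skew-symmetry of the Riemann tensor in its last two slots forces $g(W,y)=0$, so $W\perp y$ and the second identity above applies, giving
\[
g_y(W,v)=F^{2}(2-F)\,g(W,v)+3F^{4}\,g(X,W)\,g(X,v).
\]
The first term is $F^{2}(2-F)K^{g}(P)$ by the definition of sectional curvature. The crux is to kill the second term: since $X$ is parallel, $R^{g}(\cdot,\cdot)X=0$, and hence by skew-symmetry in the last two slots $g(R^{g}(v,y)y,X)=-g(R^{g}(v,y)X,y)=0$. Substituting into \eqref{flag curvature main formula} yields the claim. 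The main obstacle is precisely the vanishing of this cross term $g(X,W)$; this is the step where the Berwald hypothesis does real work beyond merely identifying the Chern and Levi-Civita connections, and its failure is what makes the non-Berwaldian case inaccessible by this method.
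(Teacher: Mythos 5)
Your proposal is correct and follows essentially the same route as the paper: identify the Chern connection with the Levi-Civita connection of $g$, compute the fundamental tensor of the Matsumoto metric at the unit vector $y$, and substitute into the flag curvature formula; your values of $g_y(y,y)=F^2$, $g_y(y,v)=F^3g(X,v)$ and $g_y(u,w)=F^2(2-F)g(u,w)+3F^4g(X,u)g(X,w)$ for $u,w\perp y$ reproduce exactly the quantities the paper imports from formulas 2.4 and 2.6 of \cite{Salimi Osaka}, and they assemble into the stated numerator $(2F^2-F^3)K^g(P)$ and denominator $F^4(2-F+2F^2g^2(X,v))$. The only difference is that you derive these identities from scratch and make explicit the vanishing of the cross term $g(X,R^g(v,y)y)$ via the parallelism of $X$, a step the paper leaves implicit in its citation.
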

\begin{proof}
Since $F$ is of Berwald type, the Levi-Civita connection of $g$ and the Chern connection of $F$ coincide. Now by using \ref{Levi-Civita connection},  formula 2.4 of \cite{Salimi Osaka} and theorem \ref{Berwald conditions}, for $g_y(R_y(v),v)$ we have
\begin{align*}
g_y \left( R_y \left( v\right) ,v\right) =&\left(\dfrac{2}{(1-g(X,y))^2}-\dfrac{1}{(1-g(X,y))^4}+\dfrac{g(X,y)}{(1-g(X,y))^4}\right) g\left( R_y \left( v\right) ,v\right)\\
=& (2F^2-F^3)K^g (P).
\end{align*}
Also formula 2.6 of \cite{Salimi Osaka} shows that
\begin{align*}
g_y (y,y) g_y (v,v)-g_y^2 (v,y)=&\dfrac{2}{(1-g(X,y))^4}+\dfrac{2g^2 (X,v)+g(X,y)-1}{(1-g(X,y))^6}\\
=&2F^4+2F^6g^2(X,v)-F^5.
\end{align*}
Now, we can complete the proof by using the relation \ref{sectional curvature} and \ref{flag curvature main formula}.
\end{proof}

\begin{theorem}
In the previous theorem if we replace Matsumoto metrics by Kropina metrics and if we suppose that the Chern connection of $F$ coincide to the Levi-Civita connection of $g$, then the flag curvature $K^F(P,y)$ satisfies the following formula
\begin{align}\label{flag curvature of Kropina metric of Berwald type}
  K^F(P,y) =\dfrac{1}{F^4 g^2\left( X,v \right) +F^2 } K^g (P).
\end{align}
\end{theorem}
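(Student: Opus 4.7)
The plan is to mirror the proof of the Matsumoto case, substituting the Kropina-specific expressions at each step. The hypothesis that the Chern connection of $F$ coincides with the Levi-Civita connection of $g$ implies that the Finslerian curvature operator $R_y$ agrees with its Riemannian counterpart, so the assumption that $\{v,y\}$ is a $g$-orthonormal basis of $P$ immediately yields $g(R_y(v),v)=K^g(P)$.

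Next I would compute the two Finsler quantities entering (\ref{flag curvature main formula}), namely the numerator $g_y(R_y(v),v)$ and the denominator $g_y(y,y)g_y(v,v)-g_y^2(v,y)$, from the fundamental tensor of the Kropina metric. These are obtained by specialising the general $(\alpha,\beta)$-formula to $\phi(s)=1/s$, $\phi'(s)=-1/s^2$, $\phi''(s)=2/s^3$, producing Kropina analogs of formulas 2.4 and 2.6 of \cite{Salimi Osaka}. Since $\{v,y\}$ is $g$-orthonormal, we have $\alpha(y)=1$ and hence $F(y)=1/g(X,y)$, so $g(X,y)=1/F$ is the key substitution. After inserting this identity, both quantities become polynomials in $F$ whose coefficients depend only on $g(X,v)$, and the flag-curvature definition then yields the claimed $\dfrac{1}{F^4 g^2(X,v)+F^2}K^g(P)$.

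The main obstacle will be the algebraic simplification of the denominator. The Kropina fundamental tensor is more singular in $\beta$ than the Matsumoto one, so expanding $g_y(y,y)g_y(v,v)-g_y^2(v,y)$ produces several cross terms involving both $g(X,y)$ and $g(X,v)$. The orthonormality relations $g(y,v)=0$ and $g(v,v)=g(y,y)=1$, combined with the substitution $g(X,y)=1/F$, must conspire to collapse these cross terms into the compact form $F^4 g^2(X,v)+F^2$; verifying this cancellation cleanly, so that the only surviving factor after dividing by the numerator is $K^g(P)$, is where the bookkeeping is most delicate.
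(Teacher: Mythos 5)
Your proposal follows the same route as the paper: the paper likewise evaluates $g_y(R_y(v),v)$ and $g_y(y,y)g_y(v,v)-g_y^2(v,y)$ from the Kropina fundamental tensor (citing formulas 2.5 and 2.7 of the reference on invariant Kropina metrics rather than re-deriving them from $\phi(s)=1/s$), uses $g(y,y)=1$ to get $F=1/g(X,y)$, and obtains $2F^2K^g(P)$ over $2F^4(F^2g^2(X,v)+1)$, which is exactly the claimed formula. The plan is correct and essentially identical to the paper's proof.
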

\begin{proof}
The proof is similar to the previous theorem. It is sufficient to use formula 2.5 of \cite{Salimi Electronic}, then we have
\begin{eqnarray*}
  g_y\left(R_y(v),v\right) &=& \dfrac{2 g\left(R_y(v),v\right)}{g^2\left( X,y \right) }\\
    &=&2F^2K^g(P).
\end{eqnarray*}
On the other hand by using formula 2.7 of \cite{Salimi Electronic} we have
\begin{align*}
g_y (y,y) g_y (v,v)-g_y^2 (v,y)=&\frac{2g^2(X,v)}{g^6(X,y)}+\frac{2}{g^4(X,y)}\\
=&2F^4(F^2g^2(X,v)+1).
\end{align*}
The above formulas complete the proof.
\end{proof}
J. Wolf, in \cite{Wolf},  proved that the sectional curvature of a left invariant Riemannian metric on an arbitrary connected non-commutative nilpotent Lie group admits zero, negative and positive values. Now by using the above two theorems we generalize Wolf's theorem to Matsumoto and Kropina metrics of Bervald type.
\begin{theorem}
Let $G$ be a connected non-commutative nilpotent Lie group equipped with a left invariant Matsumoto (Kropina) metric $F$, arising from a left invariant Riemannian metric $g$ and a left invariant vector field $X$ such that the Chern connection of $F$ coincides to the Levi-Civita connection of $g$. Then, at any point $x\in G$, the flag curvature admits positive, negative and zero values.
\end{theorem}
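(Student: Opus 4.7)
The plan is to reduce the statement to Wolf's theorem for the underlying Riemannian metric $g$ and then transport sign information to $F$ through the flag curvature formulas \eqref{flag curvature of Matsumoto metric of Berwald type} and \eqref{flag curvature of Kropina metric of Berwald type}. Because $g$ and $F$ are both left invariant, it suffices to establish the assertion at the identity $e \in G$, working entirely inside the Lie algebra $\mathfrak{g}$.

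First I would apply Wolf's theorem to $g$ on the connected non-commutative nilpotent Lie group $G$, obtaining three $2$-planes $P_+, P_-, P_0 \subset \mathfrak{g}$ with $K^g(P_+) > 0$, $K^g(P_-) < 0$ and $K^g(P_0) = 0$. For each such plane $P$ I would pick a $g$-orthonormal basis $\{y,v\}$; the formulas proved in the preceding two theorems then take the form
\[
K^F(P,y) = c(P,y)\, K^g(P),
\]
with
\[
c(P,y) = \frac{2 - F}{F^2 \bigl( 2 F^2 g^2(X,v) + 2 - F \bigr)}
\quad \text{or} \quad
c(P,y) = \frac{1}{F^4 g^2(X,v) + F^2}
\]
in the Matsumoto and Kropina cases, respectively. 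It then remains only to verify that $c(P,y) > 0$.

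In the Matsumoto case, admissibility of $F$ as a Finsler metric forces $\Vert X\Vert_g < 1/2$, so the Cauchy-Schwarz inequality gives $|g(X,y)| < 1/2$ for any $g$-unit $y$; hence $F = 1/(1 - g(X,y))$ lies in the interval $(2/3, 2)$, which makes both $2 - F$ and the denominator of $c(P,y)$ strictly positive. In the Kropina case, provided $g(X,y) > 0$ so that $F = 1/g(X,y) > 0$ is defined along $y$, the coefficient is manifestly positive. In either setting $K^F(P,y)$ therefore inherits the sign of $K^g(P)$, and the three Wolf planes produce the required positive, negative and zero flag curvatures.

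The main obstacle I expect is the Kropina case: its metric is only defined along directions $y$ with $g(X,y) > 0$, so one must show that each of Wolf's sign classes can be realized on a plane containing such a $y$. Since $\{y \in \mathfrak{g} : g(X,y) > 0\}$ is an open half-space and, for each sign, the corresponding set of planes is open in the Grassmannian of $2$-planes, a small perturbation of the Wolf planes (or a direct inspection of the planes built from the lower central series in the standard proof of Wolf's theorem) should secure a suitable $y$ in every sign class.
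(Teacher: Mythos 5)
Your proposal is correct and follows essentially the same route as the paper: apply Wolf's theorem to $g$ and observe that the coefficient of $K^g$ in each flag curvature formula is positive, using $\Vert X\Vert_g<\tfrac{1}{2}$ and Cauchy--Schwarz to get $F\in(\tfrac{2}{3},2)$ in the Matsumoto case. Your extra care about the Kropina metric's domain $\{g(X,y)>0\}$ is a point the paper's proof silently skips, but it does not change the argument's structure.
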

\begin{proof}
Regarding \cite{Wolf}, it is sufficient to show that the coefficients of $K^g$ in the formulas of flag curvatures given in the above theorems have a constant sign. Obviously, the coefficient of $K^g$ in \ref{flag curvature of Kropina metric of Berwald type} is always positive, so we only prove the case of Matsumoto metric. In the previous theorems we have considered that $g(y,y)=1$, so we have $F(y)=\frac{1}{1-g(X,y)}$. On the other hand $F$ is a Matsumoto metric hence $\sqrt{g(X,X)}<\frac{1}{2}$. Now the Cauchy-Schwarz inequality shows that $|g(X,y)|<\frac{1}{2}$, which means that $0<2-F(y)<\frac{4}{3}$. Therefore the coefficient of $K^g$ in the formula \ref{flag curvature of Matsumoto metric of Berwald type} is also positive.
\end{proof}

\section{\textbf{Left invariant $(\alpha,\beta)$-metrics of Berwald type and Randers metrics of Douglas type on Lie groups of type $\mathcal{G}_1$}}
In this section, firstly we give a necessary and sufficient condition for a left invariant $(\alpha,\beta)$-metric on an arbitrary Lie group to be of Berwald type. In fact we have shown that the criterion given in theorem 3.1 of \cite{An-Deng Monatsh}, for left invariant Randers metrics, can be extended to left invariant $(\alpha,\beta)$-metrics. Then we study the Lie group family ${\mathcal{G}}_1$ equipped with a left invariant $(\alpha , \beta)$-metric.

\begin{theorem}\label{Berwald conditions}
Let $F$ be a left invariant $(\alpha,\beta)$-metric on a Lie group $G$, arising from a left invariant Riemannian metric $g$ and a left invariant vector field $X$. Then $F$ is of Berwald type if and only if the following two conditions are valid:
\begin{align}
&g\left( \left[ y,X\right] ,z \right) + g\left( \left[ z,X\right] ,y\right) =0, \qquad \forall \, y,z \in \mathfrak{g}, \label{Berwald condition 1} \\
&g\left( \left[ y,z\right] ,X\right)= 0, \qquad \forall \, y,z \in \mathfrak{g}. \label{Berwald condition 2}
\end{align}
\end{theorem}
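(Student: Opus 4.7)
The plan is to reduce the Berwald condition for the $(\alpha,\beta)$-metric $F$ to a pointwise identity on $\mathfrak{g}$, then decompose that identity into its symmetric and antisymmetric parts in two variables, reading off precisely the two conditions of the theorem. By the Kikuchi--Matsumoto characterization recalled in the preliminaries, $F$ is of Berwald type if and only if the $1$-form $\beta$ is parallel with respect to the Levi-Civita connection of $\alpha=\sqrt{g(\cdot,\cdot)}$. Since $\beta$ corresponds to $X$ via $g$, this is equivalent to requiring that the left invariant vector field $X$ be parallel, i.e.\ $\nabla_{Y}X=0$ for every left invariant $Y\in\mathfrak{g}$.

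Using formula (\ref{Levi-Civita connection}) I would write
\[
 \nabla_{Y}X=\tfrac{1}{2}\bigl(ad_{Y}X-ad_{Y}^{*}X-ad_{X}^{*}Y\bigr),
\]
where $ad^{*}$ denotes the metric adjoint: $g(ad_{U}^{*}V,W)=g(V,[U,W])$. Taking the inner product with an arbitrary $Z\in\mathfrak{g}$ gives
\[
 2\,g(\nabla_{Y}X,Z)=g([Y,X],Z)-g(X,[Y,Z])-g(Y,[X,Z]).
\]
Thus $X$ is parallel if and only if the bilinear form
\[
 \Phi(Y,Z):=g([Y,X],Z)-g(X,[Y,Z])-g(Y,[X,Z])
\]
vanishes identically on $\mathfrak{g}\times\mathfrak{g}$.

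The key move is to separate $\Phi$ into its symmetric and skew-symmetric pieces in $Y,Z$. A short computation, using the identities $g(Y,[X,Z])=-g([Z,X],Y)$ and $g(Z,[X,Y])=-g([Y,X],Z)$, yields
\[
 \Phi(Y,Z)+\Phi(Z,Y)=2\bigl(g([Y,X],Z)+g([Z,X],Y)\bigr),\qquad
 \Phi(Y,Z)-\Phi(Z,Y)=-2\,g([Y,Z],X).
\]
Hence $\Phi\equiv 0$ is equivalent to the simultaneous vanishing of both the symmetric and the antisymmetric parts, which are exactly (\ref{Berwald condition 1}) and (\ref{Berwald condition 2}) respectively. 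Conversely, if (\ref{Berwald condition 1}) and (\ref{Berwald condition 2}) hold, then both parts of $\Phi$ vanish, so $\Phi\equiv0$, so $\nabla_{Y}X=0$ for all $Y$, so $\beta$ is parallel and $F$ is Berwaldian.

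The only real care needed is algebraic bookkeeping with $ad^{*}$ and the symmetries of $g$; there is no analytic obstacle, since the problem is purely on $\mathfrak{g}$ by left invariance. The step most prone to sign errors is the symmetric/antisymmetric decomposition, so I would double-check it by specializing to $Y=Z$ (which must recover the polarization $g([Y,X],Y)=0$ of (\ref{Berwald condition 1})) before concluding.
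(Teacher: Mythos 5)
Your proof is correct and follows essentially the same route as the paper: both reduce Berwaldness to the parallelism of $X$ via Kikuchi--Matsumoto, arrive at the same identity $g([y,X],z)+g([z,X],y)-g([y,z],X)=0$, and then split it into the two stated conditions. Your symmetric/antisymmetric decomposition of $\Phi$ is just a systematic repackaging of the paper's ``set $y=z$, polarize, then subtract'' argument, and your sign bookkeeping checks out.
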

\begin{proof}
The proof is similar to the proof of theorem 3.1 of \cite{An-Deng Monatsh}. By \cite{Kikuchi} and \cite{Matsumoto2} we know that the $(\alpha,\beta)$-metric $F$ is of Berwald type if and only if the vector field $X$ is parallel with respect to the Levi-Civita connection of $g$, and for left invariant Riemannian metrics, this is equivalent to the following equation
\begin{equation}\label{parallel condition for X}
    g\left( \left[ y,X\right] ,z \right) + g\left( \left[ z,X\right] ,y\right) +g\left( \left[ z,y\right] ,X\right)= 0, \qquad \forall \, y,z \in \mathfrak{g}.
\end{equation}
Now if we let $y=z$, then for every $y \in \mathfrak{g}$ we have $g\left( \left[ y,X\right] ,y \right)=0$. The last equation shows that for any $y,z \in \mathfrak{g}$, we have $g\left( \left[ y+z,X\right] ,y+z \right)=0$ which is equivalent to $g\left( \left[ y,X\right] ,z \right)+g\left( \left[ z,X\right] ,y \right)=0$. The last equation together with equation \ref{parallel condition for X} show that $g\left( \left[ y,z\right] ,X\right)= 0$, for any $y,z \in \mathfrak{g}$. The converse is clear.
\end{proof}

Suppose that $G\in{\mathcal{G}}_1$ and $\mathfrak{g}$ denotes its Lie algebra. Let $g$ be a left invariant Riemannian metric on $G$. As we mentioned in introduction, it has been shown that if $G\in\mathcal{G}_1$ then there exist an element $b\in\frak{g}$ and a commutative ideal $\frak{u}$ of codimension one such that $[b,z]=z$ for any $z\in\frak{u}$ (for more details see \cite{Milnor} and \cite{Nomizu}).\\
\begin{theorem}
Let $G \in {\mathcal{G}}_1 $ be a Lie group with Lie algebra $\mathfrak{g}$. Suppose that $F$ is a left invariant Randers metric defined by a left invariant Riemannian metric $g$ and a left invariant vector field $X$ on $G$. Then, $F$ is of Douglas type if and only if $X \in\emph{span}\{b\}$.
\end{theorem}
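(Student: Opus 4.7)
The plan is to combine the criterion recalled in the preliminaries -- that a Randers metric is of Douglas type if and only if its defining $1$-form $\beta$ is closed -- with the very restrictive bracket structure of Lie groups in $\mathcal{G}_1$. Since $\beta$ is left invariant and $\beta(Y) = g(X,Y)$, it suffices to test closedness on left invariant vector fields, and for any two such fields $Y,Z$ the Maurer--Cartan formula gives
\[
d\beta(Y,Z) = -\beta([Y,Z]) = -g(X,[Y,Z]).
\]
So the first step is to rephrase the Douglas property as the purely algebraic condition $g(X,[Y,Z]) = 0$ for all $Y,Z \in \mathfrak{g}$.

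Next I would compute the derived algebra using the Milnor decomposition $\mathfrak{g} = \emph{span}\{b\} \oplus \mathfrak{u}$. Writing $Y = \lambda b + u_1$ and $Z = \mu b + u_2$ with $u_1,u_2 \in \mathfrak{u}$, and expanding $[Y,Z]$ via $[b,u] = u$ for $u \in \mathfrak{u}$ together with the commutativity of $\mathfrak{u}$, one obtains $[Y,Z] = \lambda u_2 - \mu u_1 \in \mathfrak{u}$. Letting the parameters vary shows $[\mathfrak{g},\mathfrak{g}] = \mathfrak{u}$. Hence the Douglas condition is equivalent to $X$ being $g$-orthogonal to $\mathfrak{u}$.

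Finally I would match $\mathfrak{u}^{\perp}$ with $\emph{span}\{b\}$. Since $\mathfrak{u}$ has codimension one, $\mathfrak{u}^{\perp}$ is a line; and because the element $b$ is only determined modulo $\mathfrak{u}$ by the relation $[b,z] = z$ -- the substitution $b \mapsto b - u_0$ with $u_0 \in \mathfrak{u}$ preserves this relation, as $\mathfrak{u}$ is abelian -- one may harmlessly normalize $b$ to lie in $\mathfrak{u}^{\perp}$, whence $\emph{span}\{b\} = \mathfrak{u}^{\perp}$ and the equivalence follows. The only subtle point in the argument is this normalization of $b$, which is needed so that "$X \in \emph{span}\{b\}$" literally captures the orthogonality condition rather than holding only up to shifting $b$ within its coset modulo $\mathfrak{u}$; beyond that the proof is a direct application of the closedness criterion together with the rigid bracket relations defining $\mathcal{G}_1$.
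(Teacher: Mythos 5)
Your proof is correct and follows essentially the same route as the paper: the paper simply cites An--Deng (Theorem 3.2 of that reference) for the fact that a left invariant Randers metric is of Douglas type iff $X$ is $g$-orthogonal to $[\mathfrak{g},\mathfrak{g}]$, whereas you rederive that criterion from closedness of $\beta$ via the Maurer--Cartan formula, and both arguments then identify $[\mathfrak{g},\mathfrak{g}]=\mathfrak{u}$ and $\mathfrak{u}^{\perp}=\mathrm{span}\{b\}$. Your explicit normalization of $b$ into $\mathfrak{u}^{\perp}$ (using that $b$ is only determined modulo the abelian ideal $\mathfrak{u}$) is a point the paper leaves implicit but tacitly relies on in its later curvature computations, so it is a welcome clarification rather than a deviation.
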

\begin{proof}
Theorem 3.2 of \cite{An-Deng Monatsh} says that $F$ is of Douglas type if and only  if $X$ is orthogonal to $[\mathfrak{g} , \mathfrak{g}]$. It holds if and only if $X \in\emph{span}\{b\}$.
\end{proof}

\begin{theorem}
Suppose that $G$ belongs to the family ${\mathcal{G}}_1$. Then, $G$ doesn't admit any left invariant non-Riemannian $(\alpha,\beta)$-metric of Berwald type which is defined by a left invariant Riemannian metric $g$ and a left invariant vector field $X$ .
\end{theorem}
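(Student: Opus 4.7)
The approach is to apply Theorem \ref{Berwald conditions} in combination with the structural description of $\mathcal{G}_1$ recalled just before the statement, namely the splitting $\mathfrak{g}=\mathbb{R}b\oplus\mathfrak{u}$ with $\mathfrak{u}$ a commutative ideal of codimension one and $[b,z]=z$ for every $z\in\mathfrak{u}$. The plan is to show that the two Berwald conditions force $X=0$, so that $\beta\equiv 0$ and $F=\alpha\phi(0)$ reduces to a constant multiple of $\alpha$.

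First I would observe that the derived subalgebra satisfies $[\mathfrak{g},\mathfrak{g}]=\mathfrak{u}$: the inclusion $[\mathfrak{g},\mathfrak{g}]\subseteq\mathfrak{u}$ holds because $\mathfrak{g}/\mathfrak{u}$ is one-dimensional hence abelian, and the reverse inclusion follows from $[b,z]=z$ for $z\in\mathfrak{u}$. Consequently, the second Berwald condition \eqref{Berwald condition 2}, that $g([y,z],X)=0$ for all $y,z\in\mathfrak{g}$, is equivalent to saying that $X$ is $g$-orthogonal to $\mathfrak{u}$.

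Next, write $X=\lambda b+u_{0}$ with $\lambda\in\mathbb{R}$ and $u_{0}\in\mathfrak{u}$. For any $y\in\mathfrak{u}$ I would compute $[y,X]=\lambda[y,b]+[y,u_{0}]=-\lambda y$, since $\mathfrak{u}$ is commutative. Applying the first Berwald condition \eqref{Berwald condition 1} with $z=y\in\mathfrak{u}$ then yields $2g([y,X],y)=-2\lambda g(y,y)=0$ for all $y\in\mathfrak{u}$. Since $g$ is positive definite and $\mathfrak{u}\neq 0$, this forces $\lambda=0$, so $X=u_{0}\in\mathfrak{u}$.

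Combining this with the orthogonality $X\perp\mathfrak{u}$ obtained from condition \eqref{Berwald condition 2}, we conclude $X=0$, hence $\beta=0$ and $F(x,y)=\phi(0)\sqrt{g(y,y)}$ is (a constant rescaling of) the Riemannian metric $g$. I do not anticipate any serious obstacle here; the only subtlety is the choice to specialize condition \eqref{Berwald condition 1} to vectors lying in $\mathfrak{u}$, which cleanly isolates the $\mathbb{R}b$-component of $X$ and decouples the two Berwald conditions.
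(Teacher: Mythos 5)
Your proof is correct and takes essentially the same route as the paper: both apply Theorem \ref{Berwald conditions} together with Milnor's structural description of $\mathcal{G}_1$ (the splitting $\mathfrak{g}=\mathbb{R}b\oplus\mathfrak{u}$ with $[b,z]=z$) to force $X=0$. The only difference is in ordering — the paper first uses condition \ref{Berwald condition 2} to conclude $X\in\mathrm{span}\{b\}$ and then substitutes into \ref{Berwald condition 1}, while you run the two conditions independently; your version has the minor advantage of not tacitly assuming that $b$ has been chosen orthogonal to $\mathfrak{u}$.
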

\begin{proof}
Let $F$ be such an $(\alpha,\beta)$-metric. By theorem \ref{Berwald conditions} we have:
\begin{equation}
g\left( [z,y] , X \right) =0,\qquad \forall z,y \in \mathfrak{g}, \label{*}
\end{equation}
\begin{equation}
g\left( [X,z] , y \right) + g\left( [X,y] , z \right) =0,\qquad \forall z,y \in \mathfrak{g}. \label{**}
\end{equation}
Equality \ref{*} holds if and only if $X\in\emph{span}\{b\}$. Now if for $\xi\in\mathbb{R}$, we substitute $X=\xi b$ in formula \ref{**}, then we will have $X=0$.
\end{proof}
The above two theorems give us the following corollary.
\begin{cor}
Any Lie group $G\in{\mathcal{G}}_1$ admits left invariant non-Berwaldian Randers metrics of Douglas type.
\end{cor}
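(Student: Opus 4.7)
The plan is to combine the two preceding theorems to produce the desired Randers metric constructively. Starting from the element $b\in\mathfrak{g}$ guaranteed by Milnor's structure result for $\mathcal{G}_1$ (together with the abelian codimension-one ideal $\mathfrak{u}$ on which $\operatorname{ad}_b$ acts as the identity), I would choose a left invariant vector field of the form $X=\xi b$ for a real parameter $\xi$, and then form the Randers metric $F=\alpha+\beta$ associated to $g$ and $X$ via the identification in \eqref{Randers}.

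The first step is to ensure $F$ is actually a Finsler metric: this only requires the admissibility bound $\|X\|_g=|\xi|\sqrt{g(b,b)}<1$, so I would simply pick $\xi$ nonzero with $|\xi|<1/\sqrt{g(b,b)}$. The second step is that such an $F$ is of Douglas type: since $X=\xi b\in\operatorname{span}\{b\}$, the previous theorem (the $\mathcal{G}_1$-characterization of Douglas Randers metrics) applies directly and gives the Douglas property.

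The third and final step is to verify that $F$ is not Berwaldian. Here I would invoke the preceding theorem asserting that no left invariant non-Riemannian $(\alpha,\beta)$-metric on a member of $\mathcal{G}_1$ is of Berwald type. Since $\xi\neq 0$ forces $\beta\neq 0$, the Randers metric $F$ is non-Riemannian, and so it cannot be Berwaldian. Hence $F$ is a left invariant non-Berwaldian Randers metric of Douglas type, which is what the corollary claims.

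I do not expect any serious obstacle: the argument is just the juxtaposition of the two previous results, with the mild extra care that $\xi$ must be chosen small enough to meet the strong convexity condition $\|\beta\|_\alpha<1$ for Randers metrics. The only conceptual point worth highlighting in the write-up is that the two theorems cut in opposite directions on $\operatorname{span}\{b\}$: they force the Douglas candidate $X$ to lie there, while simultaneously ruling out Berwald type for any nonzero such $X$, which is precisely what produces the non-Berwaldian Douglas examples.
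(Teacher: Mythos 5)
Your proposal is correct and follows exactly the route the paper intends: the paper's "proof" is simply the remark that the two preceding theorems (the Douglas characterization $X\in\operatorname{span}\{b\}$ and the nonexistence of non-Riemannian Berwald $(\alpha,\beta)$-metrics on $\mathcal{G}_1$) combine to give the corollary. Your write-up just makes the construction explicit, including the correct observation that $\xi$ must be chosen with $|\xi|\sqrt{g(b,b)}<1$ to satisfy the Randers admissibility condition.
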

Now we present the flag curvature formula of such left invariant non-Berwaldian Randers metrics of Douglas type given in the previous corollary.

\begin{theorem}\label{flag curvature of Randers metrics of Douglas type on G_1}
Let $G\in{\mathcal{G}}_1$ be a Lie group with Lie algebra $\mathfrak{g}$ equipped with a Randers metric $F$ of Douglas type which is arisen from a left invariant Riemannian metric $g$ and a left invariant vector field $X$. Then the flag curvature of $F$ is given by
\begin{equation}
K^F(P,y)=\dfrac{g^2}{F^2} K^g(P)+\dfrac{1}{4F^4}\left( 3\xi ^2g^2\left(v,v \right)+4F\xi \eta g\left( v,v\right)   \right),
\end{equation}
where $K^g$ is the sectional curvature of $g$, $ X = \xi b$ ( $\xi \in \mathbb{R}$ ) and $y=\eta b+v$ ($\eta \in \mathbb{R}, v \in \mathfrak{u}$). In special cases we have:
\begin{align*}
&K^F(P,y) = \dfrac{g^2}{F^2}K^g(P), \qquad y\in span\{b\} \\
&K^F (P,y) = \dfrac{g^2}{F^2} K^g (P) + \dfrac{3}{4F^4} \xi ^2 g^2 \left( v,v \right) , \qquad  y \in \mathfrak{u}.
\end{align*}
\end{theorem}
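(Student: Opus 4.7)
The previous corollary shows that $F$ is Douglas but, for $\xi\neq 0$, not Berwald, so the formula (\ref{flag curvature of Randers metrics of Berwald type}) cannot apply directly; the extra term in the statement is precisely the Douglas correction that one must isolate. My plan is to (i) choose a convenient decomposition of $\mathfrak{g}$, (ii) compute the Levi--Civita connection of $g$ and then the Chern connection of $F$, and (iii) substitute into (\ref{flag curvature main formula}) using the fundamental-tensor identities of a Randers metric. Since $[b+w,z]=z$ for any $w\in\mathfrak{u}$, we may assume $b\perp\mathfrak{u}$ and $g(b,b)=1$. With $X=\xi b$ (from the previous theorem) and $y=\eta b+v$, $v\in\mathfrak{u}$, this gives $g(X,y)=\xi\eta$, $[X,y]=\xi v$, and $F(y)=\sqrt{\eta^{2}+g(v,v)}+\xi\eta$.

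Next, applying (\ref{Levi-Civita connection}) to the brackets of $\mathcal{G}_{1}$ yields
\[
\nabla_{b}b=0,\qquad \nabla_{b}z=0,\qquad \nabla_{z}b=-z,\qquad \nabla_{z}w=g(z,w)\,b \quad (z,w\in\mathfrak{u}).
\]
Hence $\nabla_{u}X=-\xi\,\pi_{\mathfrak{u}}(u)$, which is $g$-symmetric (reconfirming $d\beta=0$) but nonzero when $\xi\neq 0$. The Chern connection of $F$ therefore differs from $\nabla$ by the correction produced by the Douglas splitting $G^{i}_{F}=G^{i}_{\alpha}+Q(y)\,y^{i}$, with $Q$ positively $1$-homogeneous and computable from (\ref{spray coefficients}) applied to $F^{2}=\alpha^{2}+2\alpha\beta+\beta^{2}$; the restricted bracket structure of $\mathcal{G}_{1}$ forces $Q$ to depend only on $\xi,\eta$, and $g(v,v)$.

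Finally, I would insert these data into (\ref{flag curvature main formula}), use the Randers analogues of the Matsumoto/Kropina fundamental-tensor identities recalled from \cite{Salimi Osaka,Salimi Electronic}, and identify $g(R^{\alpha}_{y}(u),u)$ with $K^{g}(P)$ via (\ref{sectional curvature}). I expect the Berwaldian contribution to separate as $\tfrac{g^{2}}{F^{2}}K^{g}(P)$, while the remaining pieces collapse into $\tfrac{1}{4F^{4}}\bigl(3\xi^{2}g^{2}(v,v)+4F\xi\eta\,g(v,v)\bigr)$. The two special cases follow immediately: $v=0$ kills the correction, and $\eta=0$ kills only the piece linear in $\eta$. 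The main obstacle will be this last simplification: the Douglas correction $Q$, composed with the Randers fundamental tensor, generates a large collection of terms, and the bookkeeping challenge is to organize the cancellations so that $\tfrac{g^{2}}{F^{2}}K^{g}(P)$ cleanly splits off from the non-Berwaldian remainder, leaving exactly the quadratic-in-$\xi$ and mixed $\xi\eta$ terms displayed in the theorem.
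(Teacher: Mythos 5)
Your setup is sound: the reduction to $X=\xi b$ with $b\perp\mathfrak{u}$, the bracket datum $[X,y]=\xi v$, and the Levi--Civita table $\nabla_b b=\nabla_b z=0$, $\nabla_z b=-z$, $\nabla_z w=g(z,w)b$ are all correct (though you cannot simply normalize $g(b,b)=1$, since the relation $[b,z]=z$ pins the scale of $b$; this happens not to affect the final formula, but it is an unjustified assumption as written). The genuine gap is in your step (iii): for a non-Berwaldian Randers metric the quantity $R_y(u)$ in (\ref{flag curvature main formula}) is the Riemann curvature of the Finsler spray, not of the Levi--Civita connection of $\alpha$, and passing from the Douglas splitting $G^i_F=G^i_\alpha+Q(y)y^i$ to $R_y$ requires differentiating $Q$ in $x$ and $y$ and recombining with the fundamental tensor $g_y$. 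You never carry this out; the claim that the resulting terms ``collapse'' into $\tfrac{1}{4F^4}\bigl(3\xi^2g^2(v,v)+4F\xi\eta\,g(v,v)\bigr)$ is precisely the content of the theorem, not a consequence of anything you have established. As it stands the argument is a plan whose hardest step is deferred.

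The paper avoids this entire computation by quoting Theorem 2.1 of \cite{Deng-Hou Canad. J}, which asserts that for a homogeneous Randers metric of Douglas type
\[
K^F(P,y)=\frac{g^2}{F^2}K^g(P)+\frac{1}{4F^4}\Bigl(3g^2\bigl(U(y,y),X\bigr)-4F\,g\bigl(U(y,U(y,y)),X\bigr)\Bigr),
\]
where $U$ is the symmetric bilinear map defined by $2g(U(w,y),z)=g([z,w],y)+g([z,y],w)$. With this formula in hand, the proof reduces to three one-line bracket computations, $g([X,y],y)=\xi g(v,v)$, $g([[X,y],y],y)=-\xi\eta\,g(v,v)$ and $g(y,[X,\mathrm{ad}^*_y y])=\xi\eta\,g(v,v)$, which your connection table would let you reproduce immediately. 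To repair your argument, either invoke this ready-made flag-curvature formula explicitly or actually supply the spray-curvature computation you outline; without one of the two, the stated expression is not derived.
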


\begin{proof}
We use the following formula of the flag curvature which is given in theorem 2.1 of \cite{Deng-Hou Canad. J}.
\begin{equation}\label{flag curvature of Randers metrics of Douglas type}
K^F(P,y)=\dfrac{g^2}{F^2} K^g(P)+\dfrac{1}{4F^4} \left( 3g^2\left( U\left( y,y\right),X \right)-4Fg\left( U\left( y,U\left( y,y\right) \right),X  \right)   \right),
\end{equation}
where $K^g$ denotes the sectional curvature of $g$, and $U:\mathfrak{g} \times \mathfrak{g} \longrightarrow \mathfrak{g}$ is the bilinear symmetric map defined by
\begin{equation*}
2g\left( U\left( w,y\right),z \right) = g\left( [z,w],y\right) + g\left( [z,y],w\right), \qquad \forall w, y, z \in \mathfrak{g}.
\end{equation*}
A direct computation shows that
\begin{equation}\label{***}
K^F(P,y)=\dfrac{g^2}{F^2} K^g(P) + \dfrac{1}{4F^4} \left( 3g^2\left( [X,y],y\right) - 2F \left( g\left( \left[ \left[ X,y\right],y \right],y \right) - g\left( y,\left[ X,ad^*_yy\right] \right)   \right) \right),
\end{equation}
where $ad^*_y$ denotes the transpose of $ad_y$ with respect to $g$. It is easy to show that
\begin{align*}
&g\left( [X,y],y\right) = \xi g\left(v,v \right), \\
&g\left( \left[ \left[ X,y\right],y \right],y \right) = -\xi \eta g\left( v,v\right), \\
&g\left( y,\left[ X,ad^*_yy\right] \right) =  \xi \eta g\left( v,v\right).
\end{align*}
Now substituting the above equations in \ref{***} completes the proof.
\end{proof}

\section{\textbf{Left invariant $(\alpha,\beta)$-metrics of Berwald type and Randers metrics of Douglas type on Lie groups of type $\mathcal{G}_2$}}
Let $G\in{\mathcal{G}}_2$ be a Lie group equipped with a left invariant Riemannian metric $g$, and $G^\prime$ denotes its one dimensional commutator group. Suppose that $\mathfrak{g}$ and $\mathfrak{g^\prime}$ are the Lie algebras of $G$ and $G^\prime$, respectively. Assume that $\textsf{e}\in\mathfrak{g^\prime}$ is a unit element and $\Gamma\subset\mathfrak{g}$ is the hyperplane which is orthogonal to the vector $\textsf{e}$ with respect to $g$. So there exits a linear map $\phi : \Gamma \longrightarrow \mathbb{R}$ such that $[z,\textsf{e}]=\phi(z)\textsf{e}$, for any $z\in\Gamma$. It is clear that there exits a unique vector $\textsf{a}\in\Gamma$, such that for any $z\in\Gamma$ we have $\phi(z)=g(\textsf{a},z)$. So we have
\begin{center}
$[z,\textsf{e}]=g(\textsf{a},z)\textsf{e},\qquad \forall z\in\Gamma$.
\end{center}
Let $B$ be the skew-symmetric bilinear form on $\Gamma $, such that $[z,y]=B(z,y)\textsf{e}$ for any $z$ and $y$ in $\Gamma $. Then there exists a unique skew-adjoint linear transformation $f : \Gamma \longrightarrow \Gamma$ such that $B(z,y)=g\left( f \left( z\right) ,y\right) $ for each $z$ and $y$ in $\Gamma$ (for more details see \cite{Kaiser}). Therefore we have
\begin{center}
$[z,y]=g\left( f \left( z\right) ,y\right)\textsf{e}, \qquad \forall  z,y\in\Gamma$.
\end{center}

\begin{theorem}
Let $G\in{\mathcal{G}}_2$ be a Lie group equipped with a left invariant $(\alpha ,\beta)$-metric $F$ defined  by a left invariant Riemannian metric $g$ and a left invariant vector field $X$. Then
\begin{itemize}
\item[a)] $F$ is a Randers metric of Douglas type if and only if $X\in \Gamma$,
\item[b)] $F$ is of Berwald type if and only if  $X\in \Gamma \cap \mathfrak{z}(\mathfrak{g})$,
\end{itemize}
where $\mathfrak{z}(\mathfrak{g})$ denotes the center of $\mathfrak{g}$.
\end{theorem}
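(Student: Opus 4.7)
The plan is to treat (a) and (b) separately, each by combining a known criterion with the explicit bracket structure on $\mathfrak{g}=\mathbb{R}\textsf{e}\oplus\Gamma$ recorded just before the theorem.

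For part (a), I would invoke Theorem 3.2 of \cite{An-Deng Monatsh}, which says that a left invariant Randers metric is of Douglas type if and only if $X$ is orthogonal to $[\mathfrak{g},\mathfrak{g}]$. Because $G\in\mathcal{G}_2$ has one-dimensional commutator, $[\mathfrak{g},\mathfrak{g}]=\mathbb{R}\textsf{e}$, so the orthogonality condition reduces to $X\perp\textsf{e}$, that is, $X\in\Gamma$. This gives the equivalence claimed.

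For part (b), I would apply Theorem \ref{Berwald conditions}. The second Berwald condition \ref{Berwald condition 2}, $g([y,z],X)=0$ for all $y,z\in\mathfrak{g}$, is equivalent to $X\perp\textsf{e}$, again because every bracket is a scalar multiple of $\textsf{e}$; this yields $X\in\Gamma$. Assuming this, I would expand the first Berwald condition \ref{Berwald condition 1} in the decomposition $y=\eta\textsf{e}+v$, $z=\zeta\textsf{e}+w$ with $v,w\in\Gamma$, using $[X,\textsf{e}]=g(\textsf{a},X)\textsf{e}$ and $[X,v]=g(f(X),v)\textsf{e}=-g(X,f(v))\textsf{e}$, where the last equality uses the skew-adjointness of $f$. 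A short calculation separates the condition into the pair of requirements $g(\textsf{a},X)=0$ and $g(X,f(v))=0$ for every $v\in\Gamma$. These are exactly the equations $[X,\textsf{e}]=0$ and $[X,v]=0$ for all $v\in\Gamma$, so they amount to $X\in\mathfrak{z}(\mathfrak{g})$. Combined with $X\in\Gamma$, this gives $X\in\Gamma\cap\mathfrak{z}(\mathfrak{g})$, and the converse implication is immediate by reversing these steps.

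The main obstacle is purely one of bookkeeping: ensuring that the expansion of condition \ref{Berwald condition 1} cleanly decouples into the two orthogonality relations defining the center, and correctly using the skew-symmetry of $f$ so that no cross-terms in $\eta v$ or $\zeta w$ survive to contaminate the equivalence.
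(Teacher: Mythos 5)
Your proposal is correct and follows essentially the same route as the paper: part (a) via Theorem 3.2 of \cite{An-Deng Monatsh} together with the fact that $[\mathfrak{g},\mathfrak{g}]=\mathbb{R}\textsf{e}$, and part (b) via Theorem \ref{Berwald conditions}, reducing condition \ref{Berwald condition 2} to $X\in\Gamma$ and condition \ref{Berwald condition 1} to $\phi(X)=0$ and $f(X)=0$, i.e.\ $X\in\mathfrak{z}(\mathfrak{g})$. Your version just spells out the bookkeeping that the paper leaves implicit.
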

\begin{proof}
a) According to the theorem 3.2 of \cite{An-Deng Monatsh}, it is sufficient to show that $X\in \Gamma$ if and only if $g\left([z,y],X \right)=0$ for all $z,y\in\mathfrak{g}$. But it is clear because $G$ is not commutative.\\
b) It is sufficient to use theorem \ref{Berwald conditions}. Similar to part (a) $X\in \Gamma$ if and only if $g\left([z,y],X \right)=0$ for all $z,y\in\mathfrak{g}$. On the other hand, for any $z,y\in\mathfrak{g}$, $g\left( [X,z] , y \right) + g\left( [X,y] , z \right) =0$ if and only if $f(X)=\phi(X)=0$ which is equivalent to $X\in\mathfrak{z}(\mathfrak{g})$.
\end{proof}
The family of Lie groups ${\mathcal{G}}_2$ is an important class of Lie groups because it contains the family of Heisenberg Lie groups.
Let $H_{2n+1}$ be a Heisenberg group with Lie algebra $\mathfrak{h}_{2n+1}$. Then there exists a basis $\{ x_1,y_1,x_2,y_2,...,x_n,y_n,z\}$ for $\mathfrak{h}_{2n+1}$ such that the none zero commutators are as follows (for more details see \cite{Vukmirovic}):
\begin{equation*}
[x_i,y_i]= - [y_i,x_i] =z, \qquad  \text{for} \, i=1,...,n.
\end{equation*}
So we can consider $\textsf{e}=z$ and $\Gamma = \textit{span}\{ x_1,y_1,x_2,y_2,...,x_n,y_n\}$.
\begin{cor}
The Heisenberg group $H_{2n+1}$ doesn't admit any non-Riemannian $(\alpha ,\beta)$-metrics of Berwald type.
\end{cor}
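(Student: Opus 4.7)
The plan is to apply part (b) of the preceding theorem directly, since the Heisenberg algebra $\mathfrak{h}_{2n+1}$ has been described explicitly in terms of a basis in which both $\Gamma$ and the center can be read off by inspection. Specifically, the theorem tells us that any left invariant $(\alpha,\beta)$-metric of Berwald type on $G\in\mathcal{G}_2$ must arise from a defining vector field $X$ lying in $\Gamma\cap\mathfrak{z}(\mathfrak{g})$, so the entire task reduces to computing this intersection for $\mathfrak{g}=\mathfrak{h}_{2n+1}$.

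First I would identify $\textsf{e}$ and $\Gamma$ as suggested in the paragraph preceding the corollary: set $\textsf{e}=z$ and $\Gamma=\mathrm{span}\{x_1,y_1,\dots,x_n,y_n\}$. Next I would compute $\mathfrak{z}(\mathfrak{h}_{2n+1})$ from the commutation relations $[x_i,y_i]=z$ and all other brackets zero. Writing a general element as $w=\sum_i(a_i x_i+b_i y_i)+cz$ and bracketing against each $x_j$ and each $y_j$, the relations force $b_j=0$ and $a_j=0$ for every $j$, so $\mathfrak{z}(\mathfrak{h}_{2n+1})=\mathrm{span}\{z\}$.

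Combining these, $\Gamma\cap\mathfrak{z}(\mathfrak{h}_{2n+1})=\mathrm{span}\{x_1,y_1,\dots,x_n,y_n\}\cap\mathrm{span}\{z\}=\{0\}$. By the theorem, a Berwaldian left invariant $(\alpha,\beta)$-metric on $H_{2n+1}$ therefore requires $X=0$, which means the associated $1$-form $\beta$ vanishes and $F=\alpha\,\phi(0)$ reduces (up to a positive constant rescaling) to the underlying Riemannian metric $\alpha$. Hence no non-Riemannian $(\alpha,\beta)$-metric of Berwald type exists.

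There is essentially no obstacle here; the only point requiring a line of care is the identification of the center of $\mathfrak{h}_{2n+1}$, but this follows immediately from the given bracket table. The corollary is then a one-step consequence of part (b) of the theorem applied to the explicit structure of the Heisenberg algebra.
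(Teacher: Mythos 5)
Your proof is correct and follows exactly the paper's argument: apply part (b) of the preceding theorem and observe that $\Gamma\cap\mathfrak{z}(\mathfrak{h}_{2n+1})=\{0\}$, forcing $X=0$. The only difference is that you spell out the computation of the center, which the paper leaves implicit.
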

\begin{proof}
The intersection  of center of Lie algebra $\mathfrak{h} _{2n+1}$ and the hyperplane $\Gamma$ in the Heisenberg group is $\{0\}$. So, any $(\alpha , \beta)$-metrics of Berwald type must be Riemannian.
\end{proof}
Similar to the previous section we give the flag curvature formulas for left invariant Randers metric of Douglas type on such spaces.

\begin{theorem}\label{flag curvature of Randers metrics of Douglas type on G_2}
Suppose that $G\in{\mathcal{G}}_2$ is a Lie group equipped with a left invariant Randers metric of Douglas type which is defined by a left invariant Riemannian metric $g$ and a left invariant vector field $X$. Then the flag curvature is given by
\begin{align}
K^F(P,y) =& \dfrac{g^2}{F^2} K^g(P) + \dfrac{1}{4F^4}\lbrace  3( \eta ^2 \phi  (X)+ \eta g( f(X),v)) ^2\nonumber\\
&\hspace*{3cm}+2F( 2 \eta ^2\phi (v) \phi (X) +\eta \phi (v)g(f(X),v)\\
&\hspace*{3cm}+\eta ^2 g(f(X),f(v)) -\eta ^3 \phi( f(X))) \rbrace ,    \nonumber
\end{align}
where $K^g$ is the sectional curvature of $g$ and $y=\eta\textsf{e}+v$ ($\eta \in \mathbb{R}, v \in \Gamma$). Specially,
\begin{align*}
&K(P,y)=\dfrac{g^2}{F^2} K^g(P) +\dfrac{1}{4F^4} \left( 3\eta ^4 \phi ^2 (X) -2F \eta ^3 \phi (f(X))\right), \qquad y\in \textit{span}\{\textsf{e}\}, \\
&K(P,y)=\dfrac{g^2}{F^2} K^g(P), \qquad y \in \Gamma.
\end{align*}
\end{theorem}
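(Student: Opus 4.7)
The plan is to specialize the transformed flag-curvature identity used in the proof of Theorem \ref{flag curvature of Randers metrics of Douglas type on G_1} to the structural data of $\mathcal{G}_2$. Because $F$ is of Douglas type, part (a) of the preceding theorem forces $X\in\Gamma$, and the derivation of formula \ref{***} from the Deng-Hou flag curvature formula of \cite{Deng-Hou Canad. J} is purely formal, so the identity
\begin{equation*}
K^F(P,y)=\dfrac{g^2}{F^2}K^g(P)+\dfrac{1}{4F^4}\Big(3g^2([X,y],y)-2F\bigl(g([[X,y],y],y)-g(y,[X,ad^*_yy])\bigr)\Big)
\end{equation*}
remains available here verbatim. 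The task then reduces to evaluating the three scalars $g([X,y],y)$, $g([[X,y],y],y)$, $g(y,[X,ad^*_yy])$ in terms of the Kaiser data $(\textsf{e},\phi,f,\textsf{a})$.

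To evaluate them, I would decompose $y=\eta\textsf{e}+v$ with $v\in\Gamma$ and recall $X\in\Gamma$, so that the defining relations $[z,\textsf{e}]=\phi(z)\textsf{e}$ and $[z,w]=g(f(z),w)\textsf{e}$ for $z,w\in\Gamma$ apply directly. Bilinearity gives $[X,y]=(\eta\phi(X)+g(f(X),v))\textsf{e}$ at once, and pairing with $y$ yields $g([X,y],y)=\eta(\eta\phi(X)+g(f(X),v))$, whose square produces the first displayed term. A further bracket with $y$, using $[\textsf{e},\eta\textsf{e}+v]=-\phi(v)\textsf{e}$, gives $g([[X,y],y],y)=-\eta\phi(v)(\eta\phi(X)+g(f(X),v))$, accounting for two of the summands inside the braces.

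The delicate ingredient is $ad^*_yy$. I would extract its $\textsf{e}$-component and $\Gamma$-component separately via the defining property $g(ad^*_yy,z)=g(y,[y,z])$ applied first at $z=\textsf{e}$ and then at an arbitrary $z\in\Gamma$; substituting $[y,\textsf{e}]=\phi(v)\textsf{e}$ and $[y,z]=(-\eta\phi(z)+g(f(v),z))\textsf{e}$, together with the identification $\phi(z)=g(\textsf{a},z)$, should yield $ad^*_yy=\eta\phi(v)\textsf{e}-\eta^2\textsf{a}+\eta f(v)$. Applying $[X,\cdot]$ then produces a multiple of $\textsf{e}$, and pairing once more with $y$ delivers the contributions $\eta^2\phi(v)\phi(X)$, $-\eta^3\phi(f(X))$, and $\eta^2 g(f(X),f(v))$, where the middle one uses the identity $g(f(X),\textsf{a})=\phi(f(X))$. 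Substituting the three scalars into the displayed identity and collecting terms inside the braces should reproduce the claimed formula; the two special cases follow by setting $v=0$ and $\eta=0$, the latter being immediate since $[X,y]$ already lies along $\textsf{e}$ and so pairs trivially with $y\in\Gamma$, killing every correction term.

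The main technical obstacle is the treatment of $ad^*_yy$, which is the only quantity in the formula not directly expressible as a single Lie bracket; its $\Gamma$-component forces the Kaiser vector $\textsf{a}$ and the skew-adjoint map $f$ to appear inside a further bracket with $X$, and tracking the resulting $\eta^2$ and $\eta^3$ terms with the correct signs is where bookkeeping errors are most likely to creep in. Once this step is carried out, the remainder of the argument is a routine recombination of brackets already computed in the preceding paragraph.
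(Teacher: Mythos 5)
Your proposal is correct and follows exactly the paper's route: the paper likewise invokes the identity \ref{***} from the proof of Theorem \ref{flag curvature of Randers metrics of Douglas type on G_1} and reduces everything to computing $g([X,y],y)$, $g([[X,y],y],y)$ and $g(y,[X,ad^*_yy])$, and your evaluations of these three scalars (including the decomposition $ad^*_yy=\eta\phi(v)\textsf{e}-\eta^2\textsf{a}+\eta f(v)$ and the identity $g(f(X),\textsf{a})=\phi(f(X))$) agree with equations \ref{a}, \ref{b} and \ref{c} of the paper. The special cases are handled the same way, so there is nothing to add.
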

\begin{proof}
Similar to the proof of the theorem \ref{flag curvature of Randers metrics of Douglas type on G_1}, it is sufficient to compute $g\left(\left[ X,y\right] ,y \right)$,  $g\left( \left[ \left[ X,y\right] ,y\right] ,y \right)$ and $g\left( y,\left[X, ad^*_yy \right] \right)$. A simple computation shows that
\begin{align}
&g\left( [X,y],y\right) =\eta ^2 \phi (X)+\eta g\left( f(X),v\right),  \label{a}  \\
&g\left( \left[ \left[ X,y\right],y \right],y \right) = - \phi (v)\left( \eta ^2 \phi (X)+\eta g\left( f(X),v\right) \right) ,    \label{b} \\
&g\left( y,\left[ X,ad^*_yy\right] \right) = \eta ^2 g\left( f(X),f(v)\right) - \eta ^3 \phi\left(f(X) \right) +\eta ^2 \phi (X) \phi (v).   \label{c}
\end{align}
Now the substitution of the equations \ref{a}, \ref{b} and \ref{c} in \ref{***} completes the proof.
\end{proof}
\begin{cor}
In the previous theorem if $\textsf{e}$  belongs to the center of $\mathfrak{g}$, then the flag curvature is given by
\begin{equation}
K^F(P,y) = \dfrac{g^2}{F^2} K^g(P) + \dfrac{\eta ^2}{4F^4} \left( 3g^2 \left( f(X),v\right) +2F  g \left( f(X),f(v)\right)  \right).
\end{equation}
A familiar case happens when we consider the Heisenberg groups.
\end{cor}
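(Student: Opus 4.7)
The plan is to specialize the flag curvature formula from Theorem \ref{flag curvature of Randers metrics of Douglas type on G_2} to the case when $\textsf{e}$ lies in the center of $\mathfrak{g}$. First I would translate this centrality hypothesis into a statement about the linear functional $\phi$ introduced just before Theorem 5.3. By definition, $[z,\textsf{e}]=\phi(z)\textsf{e}=g(\textsf{a},z)\textsf{e}$ for all $z\in\Gamma$, so $\textsf{e}\in\mathfrak{z}(\mathfrak{g})$ forces $g(\textsf{a},z)=0$ for every $z\in\Gamma$, whence $\textsf{a}=0$ and therefore $\phi\equiv 0$ on $\Gamma$.

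Next I would substitute $\phi\equiv 0$ into the three auxiliary identities \ref{a}, \ref{b}, \ref{c} used in the proof of Theorem \ref{flag curvature of Randers metrics of Douglas type on G_2}. Since $F$ is of Douglas type, part (a) of the preceding theorem gives $X\in\Gamma$, so in particular $\phi(X)=0$ and $\phi(f(X))=0$; similarly $\phi(v)=0$ because $v\in\Gamma$. Consequently
\begin{align*}
g([X,y],y) &= \eta\,g(f(X),v),\\
g([[X,y],y],y) &= 0,\\
g(y,[X,\mathrm{ad}^*_y y]) &= \eta^{2}\,g(f(X),f(v)).
\end{align*}

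Finally I would plug these simplified expressions into the master formula \ref{***} from the proof of Theorem \ref{flag curvature of Randers metrics of Douglas type on G_1} (or equivalently into the displayed formula of Theorem \ref{flag curvature of Randers metrics of Douglas type on G_2}): the cubic bracket collapses to $3\eta^{2}g^{2}(f(X),v)$, while the bracket multiplied by $2F$ collapses to $2F\eta^{2}g(f(X),f(v))$, since the $\phi$-terms all vanish and the $\phi(f(X))$-term disappears as well. Factoring $\eta^{2}/(4F^{4})$ yields the stated identity. There is no genuine obstacle here; the only thing to be careful about is verifying that \emph{every} $\phi$-factor in the three identities really does vanish under the centrality hypothesis, i.e. that $\phi$ is evaluated only on vectors of $\Gamma$ (namely $X$, $v$, and $f(X)$, the last lying in $\Gamma$ because $f:\Gamma\to\Gamma$). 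The concluding remark about Heisenberg groups is then immediate, since for $H_{2n+1}$ the element $\textsf{e}=z$ is central and $\Gamma=\mathrm{span}\{x_{1},y_{1},\dots,x_{n},y_{n}\}$, so the hypothesis of the corollary is automatically satisfied.
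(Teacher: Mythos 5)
Your proposal is correct and follows the same route as the paper: the paper's proof simply observes that $\phi\equiv 0$ when $\textsf{e}$ is central and declares the rest clear, while you fill in the (routine) substitution into the identities of the preceding theorem. Your added care in checking that $\phi$ is only ever evaluated on elements of $\Gamma$ (including $f(X)\in\Gamma$) is a sensible elaboration but not a different method.
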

\begin{proof}
In this case we have $\phi \equiv 0$. So the proof is clear.
\end{proof}


\begin{thebibliography}{99}

\bibitem{Alekseevskii} D. V. Alekseevskii, \emph{Homogeneous Riemannian spaces of negative curvature}, Math. Sbor., \textbf{96(138)}(1975), 87--109.

\bibitem{An-Deng Monatsh} H. An and S. Deng, \emph{Invariant $(\alpha,\beta)$-metrics on homogeneous manifolds}, Monatsh. Math., \textbf{154}(2008), 89-102.

\bibitem{AnInMa} P. L. Antonelli, R. S. Ingarden, M. Matsumoto, \emph{The Theory of Sprays
and Finsler Spaces with Applications in Physics and Biology},
Kluwer Academic Publishers, (1993).

\bibitem{Asanov} G. S. Asanov, \emph{Finsler Geometry, Relativity and Gauge Theories},
D. Reidel Publishing Company, (1985).

\bibitem{Bacso-Matsumoto} S. Bacso and M. Matsumoto, \emph{On Finsler spaces of Douglas type. A generalization
of the notion of Berwald space}, Publ. Math. Debrecen, \textbf{51}(1997), 385–-406.

\bibitem{Bao-Chern-Shen} D. Bao, S. S. Chern and Z. Shen, \emph{An Introduction to Riemann-Finsler Geometry}, Springer, Berlin (2000).

\bibitem{Chang-Li} Z. Chang, X. Li, \emph{Modified Friedmann model in Randers–Finsler space of approximate Berwald type as a possible alternative to dark energy hypothesis}, Phys. Lett. B, \textbf{676}(2009), 173–-176.

\bibitem{Chern-Shen} S. S. Chern and Z. Shen, \emph{Riemann-Finsler Geometry}, World Scientific, Singapore, (2005).

\bibitem{Deng-book} S. Deng, \emph{Homogeneous Finsler Spaces}, Springer, (2012).

\bibitem{Deng-Hu Advances} S. Deng and Z. Hu, \emph{Curvatures of homogeneous Randers spaces}, Adv. Math. \textbf{240}(2013), 194--226.

\bibitem{Deng-Hou Phys. A} S. Deng, Z. Hou, \emph{Invariant Randers metrics on homogeneous Riemannian manifolds}, J. Phys.A Math.Gen, \textbf{37}(2004), 4353--4360.

\bibitem{Deng-Hou Canad. J} S. Deng and Z. Hu, \emph{On flag curvature of homogeneous Randers spaces}, Canad. J. Math.,\textbf{65(1)}(2013), 66--81.

\bibitem{Ingarden} R. S. Ingarden, \emph{On the geometrically absolute optical representation in the electron microscope}, v. Sot. Sci. Lettres Wroclaw \textbf{B45}(1957), 1--60.

\bibitem{Kaiser} V. Kaiser, \emph{Ricci Curvatures of Left Invariant Metrics on Lie Groups with One-Dinensional Commutator Groups}, Note Mat., \textbf{14(2)}(1994), 209--215.

\bibitem{Kikuchi} S. Kikuchi, \emph{On the condition that a space with $(\alpha,\beta)$-metric be locally
Minkowskian}, Tensor, N. S. \textbf{33}(1979), 242--246.

\bibitem{Li-Wang-Chang} X. Li, S. Wang, Z. Chang, \emph{Finslerian Perturbation for the $\Lambda$CDM Model}, Commun. Theor. Phys., \textbf{61}(2014), 781–-788.

\bibitem{Liu-Deng} H. Liu, S. Deng, \emph{Homogeneous $(\alpha,\beta)$-metrics of Douglas type}, Forum Math., \textbf{27}(2015), 3149–-3165.

\bibitem{Matsumoto1} M. Matsumoto, \emph{On C-reducible Finsler spaces}, Tensor, N. S., \textbf{24}(1972), 29–-37.

\bibitem{Matsumoto2} M. Matsumoto, \emph{The Berwald connection of a Finsler space with an $(\alpha,\beta)$-
metric}, Tensor, N. S. \textbf{50}(1991), 18--21.

\bibitem{Milnor} J. Milnor, \emph{Curvatures of Left Invariant Metrics on Lie Groups}, Adv. Math., \textbf{21}(1976), 293–-329.

\bibitem{Nomizu} K. Nomizu, \emph{Left-Invariant Lorentz Metrics on Lie Groups}, Osaka J. Math., \textbf{16}(1979), 143--150.

\bibitem{Randers} G. Randers, \emph{On an Asymmetrical Metric in the Four-Space of General Relativity},
Phys. Rev. \textbf{59}(1941), 195--199.

\bibitem{Salimi Osaka} H. R. Salimi Moghaddam, \emph{Invariant Matsumoto metrics on homogeneous spaces}, Osaka J. Math, \textbf{51}(2014), 39--45.

\bibitem{Salimi Electronic} H. R. Salimi Moghaddam, \emph{On the curvature of invariant Kropina metrics}, Int. Electron. J. Geom., \textbf{4(2)}(2011), 136--140.

\bibitem{Vukmirovic} S. Vukmirovic, \emph{Classification of left-invariant metrics on the Heisenberg group}, J. Geom. Phys., \textbf{94}(2015), 72--80.

\bibitem{Wolf} J. A. Wolf, \emph{Curvature in Nilpotent Lie Groups}, Proceeding of
the Am. Math. Society, \textbf{15(2)}(1964), 271--274.

\end{thebibliography}
\end{document}